\documentclass{amsart}
\usepackage{graphicx}
\usepackage{amssymb}
\usepackage[shortlabels]{enumitem}

\newtheorem{thm}{Theorem}[section]

\newtheorem{prop}[thm]{Proposition}
\theoremstyle{definition}

\theoremstyle{remark}
\newtheorem{rem}[thm]{Remark}


\begin{document}

\title{The anisotropic Bernstein problem}
\author{Connor Mooney}
\address{Department of Mathematics, UC Irvine}
\email{\tt mooneycr@math.uci.edu}
\author{Yang Yang}
\address{Department of Mathematics, UC Irvine}
\email{\tt y.yang@uci.edu}

\begin{abstract}
We construct nonlinear entire anisotropic minimal graphs over $\mathbb{R}^4$, completing the solution to the anisotropic Bernstein problem. The examples we construct have a variety of growth rates, and our approach both generalizes to higher dimensions and recovers and elucidates known examples of nonlinear entire minimal graphs over $\mathbb{R}^{n},\, n \geq 8$.
\end{abstract}

\maketitle

\section{Introduction}
In this paper we study graphical critical points of parametric elliptic functionals, which assign to an oriented hypersurface $\Sigma \subset \mathbb{R}^{n+1}$ the value
\begin{equation}\label{PEF}
A_{\Phi}(\Sigma) = \int_{\Sigma} \Phi(\nu)\,dA.
\end{equation}
Here $\nu$ is the unit normal to $\Sigma$, and $\Phi$ is a uniformly elliptic integrand, namely, a one-homogeneous function on $\mathbb{R}^{n+1}$ that is positive and smooth on $\mathbb{S}^n$, and satisfies in addition that $\{\Phi < 1\}$ is uniformly convex. Such functionals have attracted recent attention both for their applied and theoretical interest (\cite{CL1}, \cite{DD}, \cite{DDG}, \cite{DM}, \cite{DT}, \cite{DMMN}, \cite{FM}, \cite{FMP}). In particular, they arise in models of crystal surfaces and in Finsler geometry, and they present important technical challenges not present for the area functional (especially due to the lack of a monotonicity formula, see \cite{All}), often leading to more general and illuminating proofs even in the area case.

The anisotropic Bernstein problem asks whether critical points of $A_{\Phi}$ which are graphs of functions defined on all of $\mathbb{R}^n$ are necessarily hyperplanes. In the case of the area functional $\Phi(x) = |x|$, it is known through spectacular work of Bernstein, Fleming \cite{F}, De Giorgi \cite{DeG}, Almgren \cite{A}, Simons \cite{S}, and Bombieri-De Giorgi-Giusti \cite{BDG} that the answer is positive if and only if $n \leq 7$. For general uniformly elliptic integrands, it is known that the answer is positive in dimension $n = 2$ by work of Jenkins \cite{J} and in dimension $n = 3$ by work of Simon \cite{Si2}. It is also known by recent work of the first author \cite{M} that the answer is negative in dimensions $n \geq 6$. This left open the cases $n = 4,\,5$. The purpose of this paper is to settle the anisotropic Bernstein problem negatively in these remaining cases:

\begin{thm}\label{Main}
There exists a smooth nonlinear function $u: \mathbb{R}^4 \rightarrow \mathbb{R}$ and a uniformly elliptic integrand $\Phi$ on $\mathbb{R}^5$ such that the graph of $u$ in $\mathbb{R}^5$ minimizes $A_{\Phi}$.
\end{thm}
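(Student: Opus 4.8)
The plan is to carry out a Bombieri--De Giorgi--Giusti type construction, exploiting the anisotropy to create barriers around a cone that is \emph{not} area-minimizing in this dimension.

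\emph{Reduction.} Writing a competitor as a graph $\{x_5=v(x)\}$ over $\Omega\subset\mathbb{R}^4$ and using one-homogeneity of $\Phi$, one has $A_\Phi(\{x_5=v\})=\int_\Omega F(Dv)\,dx$ with $F(p):=\Phi(-p,1)$; uniform ellipticity of $\Phi$ makes $F$ smooth and uniformly convex with $F(p)=|p|+O(1)$, and the Euler--Lagrange equation is $\operatorname{div}(DF(Du))=0$. A useful preliminary remark is that \emph{every} entire $C^1$ solution of this equation automatically \emph{minimizes} $A_\Phi$: the divergence-free field $X(x,x_5):=\bigl(-DF(Du(x)),\,F(Du(x))-DF(Du(x))\cdot Du(x)\bigr)$ on $\mathbb{R}^5$ takes values in the boundary of the dual Wulff shape $\{|\cdot|_{\Phi^*}\le 1\}$ and satisfies $X\cdot\nu=\Phi(\nu)$ along the graph of $u$, hence calibrates it. So it suffices to produce a uniformly elliptic $\Phi$ on $\mathbb{R}^5$ for which $\operatorname{div}(DF(Du))=0$ has a smooth nonlinear entire solution $u:\mathbb{R}^4\to\mathbb{R}$.

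\emph{Symmetry, model cone, and the integrand (the crux).} Write $\mathbb{R}^4=\mathbb{R}^2\times\mathbb{R}^2$, $s=|x'|$, $t=|x''|$, and look for $u=u(s,t)$ odd under the swap $u(s,t)=-u(t,s)$, with $F$ carrying the matching symmetry; the equation becomes a two-dimensional weighted quasilinear equation on $\{s,t\ge 0\}$. I want a solution vanishing on the Lawson--Simons cone $\mathcal C=\{s=t\}$, positive on $\{s>t\}$, with growth $u\sim c\,(t^\gamma-s^\gamma)$ at infinity for a chosen exponent $\gamma>1$ (the freedom in $\gamma$ yielding the advertised variety of growth rates; the blow-down is the cylinder $\mathcal C\times\mathbb{R}\subset\mathbb{R}^5$). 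The main work is to design $F$ --- equivalently $\Phi$ --- so that this PDE admits a subsolution $\underline u>0$ on $\{s>t\}$ vanishing on $\mathcal C$ with the prescribed growth, together with a supersolution $\overline u\ge\underline u$ (e.g. the leaf of an accompanying foliation on the other side of the cylinder, or a reflection if symmetry permits); geometrically, one wants a one-parameter foliation of a neighborhood of $\mathcal C\times\mathbb{R}$ by $A_\Phi$-minimal hypersurfaces whose off-cylinder leaves give $\underline u$ and $\overline u$. Anisotropy is indispensable here: $\mathcal C\times\mathbb{R}$ is area-unstable in $\mathbb{R}^5$, so no such barriers exist for $\Phi=|\cdot|$; the extra degrees of freedom in the Hessian of $F$ must be used to create them. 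I would make an ansatz adapted to the symmetry (prescribing $F$ via a two-dimensional section together with a radial profile), reduce ``existence of the foliation'' to an ODE for the profile plus algebraic constraints on the section, solve it globally with the right asymptotics, and only then verify that the resulting $F$ comes from a \emph{uniformly convex} $\{\Phi<1\}$.

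\emph{Producing the entire solution, and conclusion.} Granting the barriers, for each $R$ let $u_R$ minimize $\int_{B_R}F(Dv)\,dx$ with $v=\underline u$ on $\partial B_R$; comparison with $\underline u$ and $\overline u$ gives $\underline u\le u_R\le\overline u$ on $B_R$, hence locally uniform bounds, hence locally uniform $C^{1,\alpha}$ bounds via the interior gradient estimate for equations of minimal-surface type followed by De Giorgi--Nash--Moser for the resulting uniformly elliptic equation; a subsequence converges to an entire solution $u$. Since $\underline u\le u$ is positive on $\{s>t\}$ while $u$ vanishes on $\mathcal C$, which lies in no hyperplane, $u$ is nonaffine; elliptic regularity gives smoothness; and $u$ minimizes $A_\Phi$ by the calibration remark (or, alternatively, as a locally uniform limit of the minimizers $u_R$). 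I expect the construction of $\Phi$ and of the barriers in the previous paragraph --- in particular, introducing enough distortion to stabilize $\mathcal C\times\mathbb{R}$ while keeping $\{\Phi<1\}$ uniformly convex over all of $\mathbb{S}^4$, not merely near the normal directions to $\mathcal C$ --- to be the main obstacle; the remaining steps are a by-now-standard adaptation of the Bombieri--De Giorgi--Giusti scheme to uniformly elliptic integrands.
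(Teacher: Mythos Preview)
Your overall framework---BDG-type barriers with odd symmetry over $C_{11}=\{|x'|=|x''|\}$, Dirichlet problems on $B_R$, maximum principle, and a compactness limit---matches the paper exactly, as do the calibration remark for minimality and the final regularity steps. The gap is precisely where you flag it: you have no concrete mechanism for producing $\Phi$ together with the barriers, and the routes you sketch are not the ones that work. The growth ansatz $u\sim c(t^\gamma-s^\gamma)$ is discussed in the paper's introduction in the form $|x|^{4/3}-|y|^{4/3}$ and set aside because it fails to solve an equation of minimal-surface type near $\{st=0\}$; and your proposal to directly foliate a neighborhood of $C_{11}\times\mathbb{R}\subset\mathbb{R}^5$ by $A_\Phi$-minimizers is circular, since $\Phi$ is not yet known when you need the foliation.

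The paper runs the construction in the opposite order and one dimension down. It begins from a \emph{fixed} integrand $\overline{\Psi}$ on $\mathbb{R}^4$ (built in earlier work) for which each side of $C_{11}$ is already foliated by $A_{\overline{\Psi}}$-minimal leaves $\{|y|=\lambda^{-1}\sigma(\lambda|x|)\}$ with $\sigma(\tau)=\tau+a\tau^{-\mu}+\ldots$. The five-dimensional integrand is then defined by $\Phi(x,y,z)=|z|\,\Psi(x/|z|,y/|z|)$ with $\Psi=F(\overline{\Psi})$ outside a compact set and $F(s)=s+\tfrac{1}{8}s^{-1}$ for $s$ large; uniform ellipticity is a short computation. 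The barriers come from three further steps that are absent from your proposal and constitute the heart of the argument: (i) perturb the four-dimensional leaves by solving the \emph{linearization} of the leaf ODE at $\sigma$ with source $\sigma^{-5/2}$, producing $\overline{\sigma},\underline{\sigma}$ whose $\overline{\Psi}$-mean curvatures have a definite sign; (ii) let $\overline{w},\underline{w}$ be the degree-$(1{+}\mu)$ homogeneous functions whose level sets are these perturbed leaves; (iii) ``re-stack'' by composing with explicit one-variable functions $\overline{H}$ (concave) and $\underline{H}$ (convex), so that in the expansion of $\Psi_{ij}(\nabla v)v_{ij}$ the term carrying $\overline{H}''$ (resp.\ $\underline{H}''$) dominates near $C_{11}$, where the favorable level-set curvature from step~(i) alone is not enough. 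Without (i)--(iii) and the specific pairing of $\overline{\Psi}$ with $F$, the barrier step in your outline cannot be carried out.
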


\noindent We in fact construct, for any $\mu \in (0,\,1/2)$, a pair $(u,\,\Phi)$ proving Theorem \ref{Main} such that 
$$\sup_{B_r} u \sim r^{1 + \mu}$$ 
for $r$ large, and our methods both generalize to higher dimensions and shed light on known examples in the case of the area functional.


Here and below we denote for $x \in \mathbb{R}^{k+1}$ and $y \in \mathbb{R}^{l+1}$ the cone $C_{kl} \subset \mathbb{R}^{k+l+2}$ by 
$$C_{kl} = \{|x| = |y|\}.$$
The first examples of nonlinear entire minimal graphs were constructed in \cite{BDG}. The examples in \cite{BDG} are asymptotic to $C_{kk} \times \mathbb{R} \subset \mathbb{R}^{2k+3}$, for $k \geq 3$. Similarly, for the more general anisotropic case, the example in \cite{M} is asymptotic to $C_{22} \times \mathbb{R}$. However, the approaches in \cite{BDG} and \cite{M} are completely different. In the former, the method is to carefully construct super- and sub-solutions to the minimal surface equation, with appropriate ordering and symmetries, and then use the maximum principle. In \cite{M}, the method is to first fix a choice of solution $u$, and then construct the integrand $\Phi$ by solving a linear {\it hyperbolic} equation. In the case $k = l = 2$ it turned out that for the simple choice
$$u = |x|^2 - |y|^2,$$ 
the hyperbolic equation for $\Phi$ reduced after a change of variable to the classical two-variable wave equation. This made the construction significantly shorter and more elementary than for the case of the area functional. Unfortunately the analogous choice for $u$ in the case $k = l = 1$ does {\it not} solve an equation of minimal surface type, as shown in \cite{M}. However, the choice
$$u = |x|^{4/3} - |y|^{4/3}$$
also yields a two-variable wave equation for $\Phi$ in the case $k = l = 1$ after a change of variable, so it seemed likely that the methods in \cite{M} could be adapted. The issue is that this choice of $u$ does not solve an equation of minimal surface type near $\{|x||y| = 0\}$, so both $u$ and the integrand $\Phi$ obtained by solving the wave equation need to be modified. This seems to be tricky, and we are leaving the pursuit of this approach to its conclusion for future work.

In this paper we instead proceed by the maximum principle, inspired by \cite{BDG}. The problem of constructing entire graphical minimizers in $\mathbb{R}^{n+1}$ is closely related to the existence of singular minimizers in $\mathbb{R}^n$. This was certainly recognized in \cite{BDG}, where it was shown that $C_{33}$ is area-minimizing by constructing foliations of each side of the cone by smooth area minimizers. It is clear that the level sets of the entire graphical minimizers in \cite{BDG} resemble the leaves in this foliation, but no explicit connection is made between the results. Our approach in this paper is to make this connection explicit. As a consequence we are able to construct, in the general anisotropic case, a variety of examples with many different growth rates in the optimal dimension $n = 4$, and to recover the known examples from \cite{BDG}. 

\begin{rem}
A strategy that is similar in spirit is pursued for the area functional in \cite{CoM} and \cite{BeM}, where the question of area minimality for Lawson's cones (appropriate affine transformations of $C_{kl}$) is studied with the help of computer calculations.
\end{rem}

Our starting point is our recent work \cite{MY} in which we prove that the cones $C_{kl}$ minimize parametric elliptic functionals for all $k,\, l \geq 1$ by constructing foliations. It was known previously that minimizers of (\ref{PEF}) are smooth in the case $n = 2$ by deep work of Almgren-Schoen-Simon \cite{ASS}. Morgan \cite{Mor} later proved that minimizers are not necessarily smooth in dimension $n = 3$. Indeed, he proved that $C_{11}$ minimizes a parametric elliptic functional, by constructing a calibration. Although the foliation approach in \cite{MY} is more involved, advantages are that it removes some of the guesswork involved in constructing a calibration, and that the leaves in the foliation give a hint as to how to proceed in the anisotropic Bernstein problem. We showed in particular in \cite{MY} that for any $\mu \in (0,\,1/2)$, there is an integrand $\overline{\Psi}$ such that the sides of $C_{11}$ are foliated by minimizers of $A_{\overline{\Psi}}$ that closely resemble the level surfaces of locally Lipschitz functions that are homogeneous of degree $1 + \mu$. This involved the careful analysis of a nonlinear second-order ODE. 

The first step in this paper is to deepen the analysis of the nonlinear ODE. By studying its linearization around a solution, we obtain small perturbations of the leaves in our foliation of each side of $C_{11}$ that have the same asymptotic behavior as before, but have strictly positive or negative anisotropic mean curvature. These leaves then define locally Lipschitz functions $\overline{w}$ and $\underline{w}$ that are homogeneous of degree $1 + \mu$, constant on the leaves, vanish on $C_{11}$, and by virtue of the curvature of their level sets serve as good model candidates for super- and sub-solutions.

The second step in this work is to make a choice of integrand $\Phi$ on $\mathbb{R}^5$. Our choice agrees exactly with $\overline{\Psi}$ on $\{x_5 = 0\}$, and can be viewed as way of smoothly extending $\overline{\Psi}$ to $\mathbb{S}^4 \backslash \{x_5 = 0\}$. The case of the area functional suggests taking 
$$\Phi|_{\{x_5 = 1\}} = \left(1 + \overline{\Psi}^2\right)^{1/2},$$
and our integrand indeed resembles this choice.

The final step in this work is to ``re-stack" the level sets of the functions $\overline{w}$ and $\underline{w}$ in a way that they become legitimate super- and sub-solutions to the equation of minimal surface type defined by $\Phi$ on one side of $C_{11}$. This is accomplished by composing $\overline{w}$ and $\underline{w}$ with appropriate concave, resp. convex one-variable functions. We can then proceed as in \cite{BDG}, using these super- and sub-solutions to trap the exact solutions to the equation we wish to solve in $B_{R}$ with appropriate boundary data, and taking $R$ to infinity.

We conclude the introduction with several remarks. The first is that our approach works equally well to construct examples asymptotic to $C_{kk} \times \mathbb{R}$ for all $k \geq 1$. We focus on the case $k = 1$ for simplicity of notation and to emphasize ideas. In a later section we indicate how to generalize to higher dimensions, and we also make explicit how our approach recovers the examples from \cite{BDG}, whose construction seems at first somewhat ad-hoc. The second is that our approach does not work when $k \neq l$, because the argument relies crucially on the odd symmetry over $C_{kk}$ of solutions to the PDE associated to $\Phi$. We intend to pursue this question in future work. In \cite{Si1} Simon constructs entire minimal graphs that are asymptotic to the cylinders over a variety of area-minimizing cones with isolated singularities (in particular, all of the area-minimizing Lawson cones, which are affine transformations of $C_{kl}$ with $k+l \geq 7$ or $k+l = 6$ and $\text{min}\{k,\,l\} \geq 2$, see e.g. Chapter $5$ in \cite{La}). The existence of foliations plays an important role in that paper as well, and we believe that a combination of the ideas in that work and ours may bring further clarity to the picture. Finally, we remark that when $\Phi$ is close to the area functional on $\mathbb{S}^n$ in a strong topology, e.g. $C^4$, the results are the same as in the area case. For example, the Bernstein theorem holds up to dimension $n = 7$ \cite{Si2}, regularity of minimizers holds up to dimension $n = 6$ \cite{ASS}, and stable critical points in low dimensions are flat (\cite{CL1}, \cite{CL2}). We thus know that for our examples, the integrands are necessarily far from area. It would be interesting to weaken the topology required for such results e.g. to closeness in $C^2$ (which suffices for proving the flatness of stable critical points in dimension $n = 2$ \cite{Lin}), and our examples may shed light on this question. These remarks are discussed at greater length in a later section.

The paper is organized as follows. In Section \ref{ODEAnalysis} we recall our results from \cite{MY} concerning the foliation of each side of $C_{11}$ by minimizers of parametric elliptic functionals, and we refine our analysis of a certain nonlinear ODE to obtain perturbed foliations by hypersurfaces with anisotropic mean curvature of a desired sign. In Section \ref{IntegrandChoice} we make our choice of integrand $\Phi$, which fixes the equation we wish to solve. In Section \ref{SuperSub} we construct super- and sub-solutions to this equation using the perturbed foliations. In Section \ref{Proof} we put it all together to prove Theorem \ref{Main}. Finally, in Section \ref{DiscussionSec} we discuss generalizations of our constructions to higher dimensions, the relation to the case of the area functional, and future work.

\section*{Acknowledgments}

The authors gratefully acknowledge the support of NSF grant DMS-1854788, NSF CAREER grant DMS-2143668, an Alfred P. Sloan Research Fellowship, and a UC Irvine Chancellor's Fellowship.

\section{Foliation}\label{ODEAnalysis}
In this section we first recall for the reader's convenience the construction of foliations of each side of $C_{11}$ by minimizers of (\ref{PEF}), accomplished in \cite{MY}. We then refine the analysis from \cite{MY}, and we use this along with a study of a linearized problem to perturb the leaves of the foliation so that they have positive or negative anisotropic mean curvature. Finally, we use these leaves to define homogeneous functions on $\mathbb{R}^4$ that will serve as models for super- and sub-solutions to the PDE we eventually wish to solve.

\subsection{Previous Results}
In \cite{MY} we constructed uniformly elliptic integrands $\overline{\Psi}$ on $\mathbb{R}^{4}$ defined by analytic, even, one-variable functions $\phi$ as follows:
\begin{equation}
\overline{\Psi}(x,\,y) = \varphi(|x|,\,|y|) = \begin{cases} 
|y|\phi(|x|/|y|),\, |y| > 0, \\
 |x|\phi(|y|/|x|),\, |x| > 0.
\end{cases}
\end{equation}
We showed that for any $\mu \in (0,\,1/2)$, the function $\phi$ could be chosen such that there exists a critical point $\Sigma \subset \{|y| > |x|\}$ of $A_{\overline{\Psi}}$ of the form
$$\Sigma = \{|y| = \sigma(|x|)\},$$
where the function $\sigma$ is even, analytic, locally uniformly convex, larger than $|.|$, $\sigma(0) = 1$, and
$$\sigma(\tau) = \tau + a \tau^{-\mu} + o(\tau^{-\mu})$$
for some $a > 0$ as $\tau \rightarrow \infty$ (see Lemma $2.1$ and the discussion after Remark $8$ in \cite{MY}). Furthermore, the curve $\Gamma$ parameterized for $\tau > 0$ by $(\tau^{-1}\sigma(\tau),\, \sigma'(\tau))$ tends as $\tau \rightarrow \infty$ to $(1,\,1)$, and $\Gamma$ is bounded in a certain region of the $(w,\,z)$ plane:
\begin{equation}\label{Trapping}
\Gamma \subset \{0 < z < 1\} \cap \{z > 3/2 - w/2\}
\end{equation}
(see the proof of Lemma $2.1$ in \cite{MY}). We remark that the smoothness of $\overline{\Psi}$ away from $0$ ensures that
\begin{equation}\label{phicompat}
2\phi'(1) = \phi(1),
\end{equation}
and that $\mu$ and $\phi$ are related through the identiy
\begin{equation}\label{mudef}
\frac{\phi(1)}{2\phi''(1)} = \mu(1-\mu)
\end{equation}
(identity $(11)$ in the statement of Lemma $2.1$ from \cite{MY}).

The dilations of $\Sigma$, along with their reflections over $C_{11}$, give a foliation of each side of $C_{11}$ by minimizers of $A_{\overline{\Psi}}$.
The condition that $\Sigma$ is a critical point is equivalent to $\sigma$ solving the ODE
\begin{equation}\label{ODE}
G(\sigma)(\tau) := \sigma''(\tau) + \frac{1}{\tau}P(\sigma'(\tau)) + \frac{1}{\sigma}Q(\sigma'(\tau)) = 0,
\end{equation}
where
\begin{equation}\label{Coeffs}
P(s) = \frac{\phi'(s)}{\phi''(s)}, \quad Q(s) = \frac{s\phi'(s)-\phi(s)}{\phi''(s)},
\end{equation}
and the results were obtained through the analysis of this ODE.

\begin{rem}
In \cite{MY} we defined $\overline{\Psi}$ in $\{|x| > 0\}$ and $\{|y| > 0\}$ by two different functions $\phi$ and $\psi$. However, the conditions in \cite{MY} that $\phi$ and $\psi$ are required to satisfy (namely, $(7)$ and $(8)$ in Lemma $2.1$ from \cite{MY}), are invariant under taking convex combinations when $k = l$, so we can assume $\phi = \psi$ after replacing $\overline{\Psi}(x,\,y)$ by $(\overline{\Psi}(x,\,y) + \overline{\Psi}(y,\,x))/2$.
\end{rem}


\subsection{Refinement of Foliation Analysis}
From hereon out, we fix a choice of $\mu \in (0,\,1/2)$ and $\phi$ as in the previous subsection. For the purposes of this paper we need to make the asymptotic expansion of $\sigma$ a little more precise. 

We first establish some notation. Here and for the rest of the paper we will let $c$ denote a small positive constant that depends only on $\phi$, and its value may change from line to line. For a smooth function $h$ on $(0,\,\infty)$, we denote by $O_2(h)$ a smooth function on $(0,\,\infty)$ such that, for all $\tau \geq 1$, 
$$|O_2(h)(\tau)| \leq c^{-1}|h(\tau)|, \quad |O_2(h)'(\tau)| \leq c^{-1}|h'(\tau)|,\, \quad |O_2(h)''(\tau)| \leq c^{-1}|h''(\tau)|.$$

\begin{prop}\label{Asymptotics}
There exists $a > 0$ and $b \in \mathbb{R}$ such that
$$\sigma = \tau + a\tau^{-\mu} + b\tau^{\mu-1} + O_2(\tau^{-1-2\mu}).$$
\end{prop}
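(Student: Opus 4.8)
The plan is to refine the asymptotic expansion of $\sigma$ by treating the ODE \eqref{ODE} perturbatively around the leading term $\tau$. First I would record the relevant Taylor data for the coefficient functions $P$ and $Q$ at $s = 1$: since $\sigma'(\tau) \to 1$, we have $P(\sigma') = P(1) + P'(1)(\sigma'-1) + O((\sigma'-1)^2)$ and similarly for $Q$. Using the compatibility relation \eqref{phicompat} one gets $Q(1) = (\phi'(1) - \phi(1))/\phi''(1) = -\phi'(1)/\phi''(1) = -P(1)$, so the ODE at leading order reads $\sigma'' + \frac{1}{\tau}P(1) - \frac{1}{\sigma}P(1) + \ldots = 0$; writing $\sigma = \tau(1 + v)$ with $v$ small, the $\frac{1}{\tau} - \frac{1}{\sigma}$ combination is $O(v/\tau)$, which is what produces the inverse-power behavior rather than logarithmic growth. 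The identity \eqref{mudef} is exactly the statement that the indicial equation of the linearized operator at infinity has roots $-\mu$ and $\mu - 1$, which explains why the ansatz $\sigma = \tau + a\tau^{-\mu} + b\tau^{\mu-1} + (\text{lower})$ is the natural one: $\tau^{-\mu}$ and $\tau^{\mu-1}$ span the kernel of the linearized operator, and $\tau^{-1-2\mu}$ is the size of the first genuinely nonlinear correction (coming from the $(\sigma'-1)^2$ terms, which are $O(\tau^{-2\mu-2})$ after differentiation, or from the cross term $a\tau^{-\mu} \cdot \tau^{-\mu}$ type contributions).

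The concrete steps I would carry out: (1) From \cite{MY} we already know $\sigma = \tau + a\tau^{-\mu} + o(\tau^{-\mu})$ with $a > 0$; set $\sigma = \tau + a\tau^{-\mu} + \rho$ where $\rho = o(\tau^{-\mu})$ and, crucially, we also control $\rho'$ and $\rho''$ with corresponding bounds (this should follow from the analysis in \cite{MY}, which tracks the curve $\Gamma$ and hence $\sigma'$). (2) Substitute into \eqref{ODE} and linearize: the terms involving $\rho$ produce, to leading order, the linearized operator $L\rho := \rho'' + \frac{1}{\tau}P'(1)\rho' + \frac{1}{\tau^2}Q'(1)\rho \cdot(\text{something}) $ — more precisely, expanding $\frac{1}{\sigma}Q(\sigma')$ around $\sigma = \tau + a\tau^{-\mu}$, $\sigma' = 1 - a\mu\tau^{-\mu-1}$, one collects all terms of order $\tau^{-\mu-2}$ and lower. (3) Identify the inhomogeneous forcing term: the mismatch from plugging in just $\tau + a\tau^{-\mu}$ (without $\rho$) should be of size $\tau^{-2\mu-2}$ or, if there is a $\tau^{\mu-3}$-type term, of that size; the slower of these decay rates after one integration of the Euler-type operator $L$ (whose homogeneous solutions are $\tau^{-\mu}, \tau^{\mu-1}$) yields a particular solution of size $\tau^{\mu-1}$, giving the coefficient $b$. (4) Having pinned down $b$, repeat once more: set $\sigma = \tau + a\tau^{-\mu} + b\tau^{\mu-1} + \eta$, show the forcing is now $\mathcal{F}(\tau^{-1-2\mu})$, and invoke a fixed-point / variation-of-parameters argument on $[\tau_0, \infty)$ for the operator $L$ to conclude $\eta = \mathcal{F}(\tau^{-1-2\mu})$, including the derivative bounds encoded in the definition of $\mathcal{F}$.

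The main obstacle I anticipate is bookkeeping of the derivative estimates: the notation $\mathcal{F}(h)$ demands control not just of $\eta$ but of $\eta'$ and $\eta''$ with the sharp rates $|h|, |h'|, |h''|$, and the ODE \eqref{ODE} is second order with coefficients $\frac{1}{\tau}P(\sigma')$ and $\frac{1}{\sigma}Q(\sigma')$ that themselves depend on $\sigma'$, so differentiating the integral representation of $\eta$ requires care that the error does not lose a power. The standard way around this is to set up the fixed-point argument in a weighted $C^2$ space on $[\tau_0,\infty)$ with norm $\sup \tau^{1+2\mu}(|\eta| + \tau|\eta'| + \tau^2|\eta''|)$ and verify the relevant integral operator (built from the two homogeneous solutions $\tau^{-\mu}$ and $\tau^{\mu-1}$ of the Euler operator $L$, via variation of parameters) is a contraction — the Wronskian of $\tau^{-\mu}$ and $\tau^{\mu-1}$ is a nonzero multiple of $\tau^{-2}$, which makes the variation-of-parameters kernel explicit and the estimates checkable. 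A secondary subtlety is making sure the condition $\mu \in (0,1/2)$ is used correctly so that both $\tau^{-\mu}$ and $\tau^{\mu - 1}$ genuinely decay (indeed $\mu - 1 < -1/2 < 0$) and the remainder exponent $-1-2\mu$ is strictly more negative than $\mu - 1$, i.e. $-1 - 2\mu < \mu - 1 \iff 0 < 3\mu$, which is automatic; this ordering is what guarantees the expansion is a genuine asymptotic series.
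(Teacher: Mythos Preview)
Your plan is correct and would succeed; the underlying mechanism (linearize at the asymptotic profile, identify the two-dimensional kernel with exponents $-\mu$ and $\mu-1$, treat the quadratic nonlinearity as a forcing of order $\tau^{-2-2\mu}$, and bootstrap) is exactly what the paper does as well. The implementations differ, however. The paper passes to the first-order autonomous system $\mathbf{X}(s) = (e^{-s}\sigma(e^s),\,\sigma'(e^s))$ in $s = \log\tau$ and linearizes at the fixed point $(1,1)$; the matrix $D\mathbf{V}(1,1)$ has eigenvalues $-1-\mu$ and $\mu-2$, which are your indicial roots shifted by $-1$. It then gets a crude two-sided bound $ce^{-5s} \le |\mathbf{Y}|^2 \le c^{-1}e^{-3s/2}$ from the symmetric part $M+M^T$, decomposes $\mathbf{Y}$ along the eigenvectors, and iterates twice to reach the desired expansion. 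The main advantage of the system formulation is that it tracks $\sigma'-1$ as a component of $\mathbf{Y}$, so the derivative estimates you flag as ``the main obstacle'' come for free and no weighted $C^2$ fixed-point machinery is needed; $\sigma''$ is then read off directly from the ODE. Your variation-of-parameters approach in the $\tau$ variable is the classical dual picture and is perhaps more transparent if one has the Euler-operator viewpoint in mind, but it does require you to handle the derivative bounds by hand (your weighted-norm proposal would work). One minor point: you take $a>0$ as input from \cite{MY}, which is legitimate given Section~2.1, whereas the paper re-derives it here from the lower bound $|\mathbf{Y}|^2 \ge ce^{-5s}$ together with the trapping region \eqref{Trapping}, which forbids the pure-$\mathbf{q}$ direction.
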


\noindent For our purposes, the following corollary in fact suffices:
\begin{equation}\label{Asymptotic}
\sigma = \tau + a\tau^{-\mu} + O_2(\tau^{-1/2}) \text{ for some } a > 0.
\end{equation}

Before proving Proposition \ref{Asymptotics}, we recall a few things from the proof of the results in the previous subsection (more precisely, the proof of Lemma $2.1$ from \cite{MY}). First, the ODE (\ref{ODE}) for $\sigma$ can be written as a first-order autonomous system for ${\bf X}(s) := (e^{-s}\sigma(e^s),\, \sigma'(e^s))$ of the form
$${\bf X}'(s) = {\bf V}({\bf X}(s)),$$
where
$${\bf V}(w,\,z) = (-w + z,\, -P(z) - Q(z)/w).$$
Using the identities (\ref{phicompat}) and (\ref{mudef}) we have that ${\bf V}(1,\,1) = (0,\,0)$ and that
$$D{\bf V}(1,\,1) = M := \begin{pmatrix}
-1 & 1 \\
-\mu(1-\mu) & -2
\end{pmatrix}.
$$
The matrix $M$ has eigenvalues $-1-\mu$ and $\mu - 2$, corresponding to eigenvectors in the directions of lines with the slopes $-\mu$ and $\mu-1$. We proved in Lemma $2.1$ of \cite{MY} that the solution curve ${\bf X}$ tends to $(1,\,1)$ as $s$ tends to infinity, and by (\ref{Trapping}) is trapped in a region which excludes the line through $(1,\,1)$ with slope $\mu - 1$.

\begin{proof}[{\bf Proof of Proposition \ref{Asymptotics}:}]
Let 
$${\bf Y}(s) = {\bf X}(s) - (1,\,1), \quad {\bf W}(w,\,z) = {\bf V}(w+1,\,z+1),$$
so that
$${\bf Y}' = {\bf W}({\bf Y}), \quad {\bf W}(0) = 0, \quad D{\bf W}(0) = M.$$
The eigenvalues of $M^T + M$ are $-3 \pm \sqrt{1+(1-\mu(1-\mu))^2} \in (-5,\, -3/2)$. Since $|{\bf Y}|$ tends to zero, we conclude from the ODE for $|{\bf Y}|^2$ that
\begin{equation}\label{ModBounds}
ce^{-5s} \leq |{\bf Y}|^2(s) \leq c^{-1}e^{-\frac{3}{2}s}, \quad s \geq 0.
\end{equation}
To obtain (\ref{ModBounds}) we use that
\begin{equation}\label{ExpandedODE}
|{\bf Y}' - M{\bf Y}| \leq c^{-1}|{\bf Y}|^2, \quad s \geq 0.
\end{equation}
Letting 
$${\bf p} = (1,\,-\mu), \quad {\bf q} = (1,\,\mu-1)$$ 
denote the eigenvectors of $M$, decomposing ${\bf Y}$ into a linear combination of these vectors, and using (\ref{ModBounds}) in (\ref{ExpandedODE}) gives in a similar way that for some $a_0 \in \mathbb{R}$,
$$|{\bf Y}(s) - a_0e^{-(1+\mu)s}{\bf p}| \leq c^{-1}e^{-3s/2}, \quad s \geq 0.$$
We conclude from this the following improvement of (\ref{ModBounds}):
$$|{\bf Y}|^2 \leq c^{-1}e^{-2(1+\mu)s}, \quad s \geq 0.$$
Using this improved estimate in (\ref{ExpandedODE}) we conclude that for some $a,\,b \in \mathbb{R}$,
$$|{\bf Y}(s) - ae^{-(1+\mu)s}{\bf p} - be^{(\mu-2)s}{\bf q}| \leq c^{-1}e^{-2(1+\mu)s}, \quad s \geq 0.$$
The conclusion follows from this expression and from the second component of the ODE for ${\bf Y}$, provided $a > 0$. 
To show that $a > 0$, note first that by (\ref{Trapping}), we have $a \geq 0$ and that if $a = 0$ then $b = 0$. In the latter case we would have $|{\bf Y}|^2 \leq c^{-1}e^{-4(1+\mu)s},\, s \geq 0$, and upon examining the ODE (\ref{ExpandedODE}) for ${\bf Y}$ again we could improve the expansion of ${\bf Y}$ to
$$|{\bf Y}(s) - a_1e^{-(1+\mu)s}{\bf p} - b_1e^{(\mu-2)s}{\bf q}| \leq c^{-1}e^{-4(1+\mu)s}, \quad s \geq 0.$$
If $a_1 \neq 0$ then the upper bound $|{\bf Y}|^2 \leq c^{-1}e^{-4(1+\mu)s},\, s \geq 0$ is violated, hence $a_1 = 0$, and again by (\ref{Trapping}) we have $b_1 = 0$. We conclude that $|{\bf Y}|^2 \leq c^{-1}e^{-8(1+\mu)s},\, s \geq 0$, which contradicts the lower bound in (\ref{ModBounds}).
\end{proof}

\subsection{Linear ODE}
Our goal now is to perturb the leaves in the foliation determined by the function $\sigma$. We let $L$ denote the linearization of the ODE (\ref{ODE}) at $\sigma$, namely,
\begin{align*}
Lf &= f'' + \left(\frac{1}{\tau} + \frac{\sigma'}{\sigma} + \frac{\sigma''\phi'''}{\phi''}\right)f' + \frac{\phi - \sigma'\phi'}{\sigma^2\phi''}f \\
&:= f'' + [\log(p)]'f' + qf,
\end{align*}
where 
$$p(s) = s\sigma(s)\phi''(\sigma'(s)).$$
Using the invariance of the operator $G$ under the Lipschitz rescalings
$$\sigma \rightarrow \sigma_{\lambda} := \lambda^{-1}\sigma(\lambda \tau),$$
we see that the function
$$f_0(\tau) := -\frac{d}{d\lambda} \sigma_{\lambda}(\tau)|_{\lambda = 1} = \sigma(\tau) - \tau\sigma'(\tau)$$
solves the linearized equation
$$Lf_0 = 0.$$ 
Furthermore, by the properties of $\sigma$ and the estimate (\ref{Asymptotic}), $f_0$ is smooth, positive, even, and satisfies
\begin{equation}\label{f0Asymptotics}
f_0 = a(1+\mu)\tau^{-\mu} + O_2(\tau^{-1/2}).
\end{equation}
For $g$ continuous, the solution to the ODE
$$Lf = g, \quad f(0) = f'(0) = 0$$
can be written
\begin{equation}\label{LinearODESolution}
f(\tau) = f_0(\tau) \int_0^{\tau} \frac{1}{f_0^2(t)p(t)}\int_0^{t} g(s)p(s)f_0(s)\,ds\,dt.
\end{equation}
Taking
$$g = \sigma^{-5/2}$$
in (\ref{LinearODESolution}) we obtain a smooth even solution $f_1$. By the asymptotics (\ref{f0Asymptotics}) of $f_0$ and the formula (\ref{LinearODESolution}) for $f_1$, we have for some $d \in \mathbb{R}$ that
\begin{equation}\label{f1Asymptotics}
f_1 = d\tau^{-\mu} + O_2(\tau^{-1/2}).
\end{equation}
Below we will often bound quantities by powers of $\sigma$, which serves as a strictly positive regularization of the function $|\tau|$. Using (\ref{Asymptotic}) and (\ref{f1Asymptotics}) and applying Taylor's theorem with remainder to $P$ and $Q$, it is straightforward to show for $\epsilon$ small that
\begin{equation}\label{Expansion}
|G(\sigma + \epsilon f_1) - \epsilon L(f_1)| \leq c^{-1}\epsilon^2\sigma^{-3-2\mu}.
\end{equation}
Indeed, Taylor expansion gives for $|\epsilon| > 0$ small and any $\tau > 0$ that
\begin{align*}
\epsilon^{-2}|G(\sigma + \epsilon f_1) - \epsilon L(f_1)| &\leq \frac{\|P''\|_{L^{\infty}([0,\,\sigma'(\tau) + |\epsilon f_1'(\tau)|])}}{\tau}f_1'^2 \\
&+ \|Q\|_{C^2([0,\,1])}\left(\frac{f_1'^2}{\sigma} + \frac{|f_1f_1'|}{\sigma^2} + \frac{f_1^2}{\sigma^3}\right) \\
&+ |\epsilon| \|Q\|_{C^2([0,\,1])}\left(\frac{|f_1f_1'^2|}{\sigma^2} + \frac{|f_1^2f_1'|}{\sigma^3}\right) \\
&+ \epsilon^2\|Q\|_{C^2([0,\,1])}\frac{f_1^2f_1'^2}{\sigma^3}.
\end{align*}
Using that $P$ is odd and $\sigma$ and $f_1$ are even we have that
$$\frac{\|P''\|_{L^{\infty}([0,\,\sigma'(\tau) + |\epsilon f_1'(\tau)|])}}{\tau} \leq c^{-1}\frac{1}{\sigma},$$
and the remaining terms can be bounded using (\ref{Asymptotic}) and (\ref{f1Asymptotics}).

In summary, we have proven:
\begin{prop}\label{ODESuperSub}
There exists $\epsilon_0 > 0$ small such that the functions
$$\overline{\sigma} := \sigma + \epsilon_0 f_1, \quad \underline{\sigma} := \sigma - \epsilon_0 f_1$$
are even, locally uniformly convex, larger than $|\tau|$, and have the asymptotics
\begin{equation}\label{SuperSubAsymptotics}
\overline{\sigma} = \tau + \overline{a} \tau^{-\mu} + O_2(\tau^{-1/2}), \quad \underline{\sigma} = \tau + \underline{a} \tau^{-\mu} + O_2(\tau^{-1/2})
\end{equation}
for some $\overline{a},\,\underline{a} > 0$. Furthermore, they satisfy for all $\tau \in \mathbb{R}$ that
\begin{equation}\label{ODERHS}
G(\overline{\sigma}) \geq c\sigma^{-5/2}, \quad -G(\underline{\sigma}) \geq c\sigma^{-5/2}
\end{equation}
and
\begin{equation}\label{Ordering}
\frac{1}{2} \overline{\sigma}(2\tau) \leq \underline{\sigma}(\tau) \leq \overline{\sigma}(\tau).
\end{equation}
\end{prop}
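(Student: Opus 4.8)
The plan is to regard $\overline\sigma$ and $\underline\sigma$ as order-$\epsilon_0$ perturbations of $\sigma$, and to fix $\epsilon_0$ small in a few successive steps, each time pitting a quantity that is quantitatively bounded away from zero for $\sigma$ against an $O(\epsilon_0)$ contribution coming from $f_1$. First I would record, by adding $\pm\epsilon_0$ times (\ref{f1Asymptotics}) to (\ref{Asymptotic}), that (\ref{SuperSubAsymptotics}) holds with $\overline a = a + \epsilon_0 d$ and $\underline a = a - \epsilon_0 d$; since $a > 0$, this gives $\overline a,\,\underline a \in [a/2,\,3a/2]$ as soon as $\epsilon_0 < a/(1+|d|)$, and I would work under this restriction from the start, so that the thresholds appearing below depend only on $\phi$. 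Evenness of $\overline\sigma,\underline\sigma$ is immediate from that of $\sigma$ and $f_1$. To prove $\overline\sigma,\underline\sigma > |\tau|$ and local uniform convexity, I would split into a far region $\tau \ge \tau_1$ and a compact region $[0,\tau_1]$ and use evenness: on the far region, differentiate (\ref{SuperSubAsymptotics}) zero and two times, and observe that because $\mu < 1/2$ the terms $\overline a \tau^{-\mu}$ and $\overline a \mu(1+\mu)\tau^{-\mu - 2}$ (resp. with $\underline a$) strictly dominate the $\mathcal F(\tau^{-1/2})$-remainders, which are $O(\tau^{-1/2})$ and $O(\tau^{-5/2})$; on the compact region, $\sigma - |\tau|$ and $\sigma''$ are bounded below by positive constants depending only on $\phi$ and $\tau_1$, which $\epsilon_0|f_1|$ and $\epsilon_0|f_1''|$ do not overcome once $\epsilon_0$ is small.

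For the differential inequality (\ref{ODERHS}) I would combine (\ref{Expansion}), taken with $\epsilon = \pm\epsilon_0$, with the defining property $L(f_1) = \sigma^{-5/2}$ of $f_1$: this yields $G(\sigma \pm \epsilon_0 f_1) = \pm\epsilon_0 \sigma^{-5/2} + E_{\pm}$ with $|E_{\pm}| \le c^{-1}\epsilon_0^2 \sigma^{-3-2\mu}$, and since $\sigma \ge \sigma(0) = 1$ by convexity and $3 + 2\mu > 5/2$, one has $\sigma^{-3-2\mu} \le \sigma^{-5/2}$, hence $|E_{\pm}| \le c^{-1}\epsilon_0^2 \sigma^{-5/2}$. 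Taking $\epsilon_0$ small then gives $G(\overline\sigma) \ge \tfrac12\epsilon_0\sigma^{-5/2}$ and $-G(\underline\sigma) \ge \tfrac12\epsilon_0\sigma^{-5/2}$ for $\tau > 0$; since $\phi$ is even, $G$ applied to an even function is even and extends continuously across $\tau = 0$, so (\ref{ODERHS}) follows on all of $\mathbb R$ (the constant $\tfrac12\epsilon_0$ being absorbed into the running convention on $c$).

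The ordering (\ref{Ordering}) is where the structure of the foliation enters. For $\underline\sigma \le \overline\sigma$ I would use $\overline\sigma - \underline\sigma = 2\epsilon_0 f_1$ and read off $f_1 \ge 0$ from the representation (\ref{LinearODESolution}): $g = \sigma^{-5/2} > 0$, the weight $p(s) = s\sigma(s)\phi''(\sigma'(s))$ is positive for $s > 0$ (as $\sigma'$ stays in $[0,1)$, where $\phi'' > 0$), and $f_0 > 0$, so both iterated integrals are nonnegative. For $\tfrac12\overline\sigma(2\tau) \le \underline\sigma(\tau)$ I would put $h(\tau) := \underline\sigma(\tau) - \tfrac12\overline\sigma(2\tau)$, note that it is even, and split once more: from (\ref{SuperSubAsymptotics}), $h = \big(\underline a - 2^{-1-\mu}\overline a\big)\tau^{-\mu} + \mathcal F(\tau^{-1/2})$ with $\underline a - 2^{-1-\mu}\overline a \to a(1 - 2^{-1-\mu}) > 0$ as $\epsilon_0 \to 0$, so $h > 0$ for $\tau \ge \tau_2$ once $\epsilon_0$ is small; on $[0,\tau_2]$ I would write $h = \big(\sigma(\tau) - \tfrac12\sigma(2\tau)\big) - \epsilon_0\big(f_1(\tau) + \tfrac12 f_1(2\tau)\big)$ and use that $\lambda \mapsto \lambda^{-1}\sigma(\lambda\tau)$ is strictly decreasing --- its $\lambda$-derivative is $-\lambda^{-2}f_0(\lambda\tau) < 0$ --- so that $\sigma(\tau) - \tfrac12\sigma(2\tau) > 0$ for all $\tau$, hence $\ge \delta_2 > 0$ on the compact interval, a bound that the $O(\epsilon_0)$ term cannot beat for $\epsilon_0$ small.

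I expect the main obstacle to be not any single estimate but the bookkeeping of the choices of $\epsilon_0$: the far-region thresholds $\tau_1,\tau_2$ (and hence the compact-region constants) must be pinned down first, using only the crude bounds $\overline a,\underline a \in [a/2,\,3a/2]$ and $\underline a - 2^{-1-\mu}\overline a \ge \tfrac12 a(1 - 2^{-1-\mu})$ valid for $\epsilon_0 < a/(1+|d|)$, and only afterwards may $\epsilon_0$ be shrunk further --- finitely many times --- to win the compact-region comparisons and to dominate the quadratic error in (\ref{ODERHS}). Once the expansions (\ref{Asymptotic}), (\ref{f1Asymptotics}), (\ref{Expansion}), the positivity of $f_0$, and the monotonicity of the Lipschitz rescalings are in hand, each individual claim reduces to an elementary comparison.
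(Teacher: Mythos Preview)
Your proposal is correct and follows essentially the same route as the paper, which after stating the proposition simply points to the ingredients: ``(\ref{Ordering}) follows from the asymptotics (\ref{Asymptotic}) for $\sigma$ and (\ref{f1Asymptotics}) for $f_1$, and the smallness of $\epsilon_0$. For (\ref{ODERHS}) we use (\ref{Expansion}) and that $5/2 < 3+2\mu$.'' Your argument fleshes out exactly these hints --- the Taylor-error bound combined with $\sigma^{-3-2\mu}\le\sigma^{-5/2}$ for (\ref{ODERHS}), the asymptotics plus compact-region perturbation for the rest --- and your explicit use of the monotonicity of $\lambda\mapsto\lambda^{-1}\sigma(\lambda\tau)$ via $f_0>0$ to secure $\sigma(\tau)>\tfrac12\sigma(2\tau)$ on the compact interval is a clean way to make precise what the paper leaves to the reader. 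Your care about fixing the far-region thresholds before shrinking $\epsilon_0$ avoids the only possible circularity.
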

\noindent The inequality (\ref{Ordering}) follows from the asymptotics (\ref{Asymptotic}) for $\sigma$ and (\ref{f1Asymptotics}) for $f_1$, and the smallness of $\epsilon_0$. For (\ref{ODERHS}) we use (\ref{Expansion}) and that $5/2 < 3+2\mu$. 

Geometrically, the surface 
$$\overline{\Sigma} := \{|y| = \overline{\sigma}(|x|)\}$$ 
has anisotropic mean curvature vector pointing ``away from" $C_{11}$, and the opposite is true for the hypersurface
$$\underline{\Sigma} := \{|y| = \underline{\sigma}(|x|)\}.$$

\begin{rem}\label{Flexibility}
The choice $g = \sigma^{-5/2}$ is convenient but there is flexibility in the choice of exponent. For all arguments in this paper, the choice $g = \sigma^{-\beta-2}$ with
$$\mu < \beta < 1-\mu$$
would suffice. These inequalities guarantee that
$$\sigma = \tau + a \tau^{-\mu} + O_2(\tau^{-\beta}),$$ 
that the solution $f$ to $Lf = g,\, f(0) = f'(0) = 0$ satisfies
$$f = d\tau^{-\mu} + O_2(\tau^{-\beta})$$
for some $d \in \mathbb{R}$, and that $G(\sigma + \epsilon f) \geq cg$ for $\epsilon > 0$ small. The arguments below can also be treated in a similar way as presented for such choices of $\beta$.
\end{rem}


\subsection{Model Super- and Sub-solutions}
To conclude the section we define functions that are homogeneous of degree $1 + \mu$ and take the value $1$ on $\overline{\Sigma}$ and $\underline{\Sigma}$. These serve as model super- and sub-solutions to the PDE we eventually wish to solve. First, we define functions $\overline{w_0},\, \underline{w_0}$ of two variables $(\xi,\, \zeta)$ in $\{\zeta > |\xi|\}$ by
$$\overline{w_0}(\lambda^{-1}\tau,\,\lambda^{-1}\overline{\sigma}(\tau)) = \underline{w_0}(\lambda^{-1}\tau,\,\lambda^{-1}\underline{\sigma}(\tau)) = \lambda^{-1-\mu}.$$ 
Here $\lambda > 0$ and $\tau \in \mathbb{R}$. We extend these functions over the diagonals by odd reflection, and we define them to vanish on the diagonals. The continuity of $\overline{w_0}$ follows from the identity $\overline{w_0}(\tau/\overline{\sigma}(\tau),\,1) = \overline{\sigma}^{-1-\mu}$ and taking $\tau$ to $\pm \infty$, and similarly for $\underline{w_0}$. The calculations below show that $\overline{w_0},\, \underline{w_0}$ are in fact locally Lipschitz.

Here and below we evaluate the quantities of interest for $\overline{w_0}$ at the point $\lambda^{-1}(\tau,\, \overline{\sigma}(\tau))$, and for $\underline{w_0}$ at $\lambda^{-1}(\tau,\, \underline{\sigma}(\tau))$. We calculate
\begin{equation}\label{GradExpression}
\nabla \overline{w_0} = (1+\mu)\frac{\lambda^{-\mu}}{\overline{\sigma} - \tau \overline{\sigma}'}(-\overline{\sigma}',\,1), \quad \nabla \underline{w_0} = (1+\mu)\frac{\lambda^{-\mu}}{\underline{\sigma} - \tau \underline{\sigma}'}(-\underline{\sigma}',\,1).
\end{equation}
Using Proposition \ref{ODESuperSub} we conclude that
\begin{equation}\label{GradEst0}
c\lambda^{-\mu}\sigma^{\mu} \leq |\nabla \overline{w_0}|,\, |\nabla \underline{w_0}| \leq c^{-1}\lambda^{-\mu}\sigma^{\mu}.
\end{equation}
It follows that $c \leq |\nabla \overline{w_0}(\tau/\overline{\sigma}(\tau),\,1)|,\, |\nabla \underline{w_0}(\tau/\underline{\sigma}(\tau),\,1)| \leq c^{-1}$ for all $\tau \in \mathbb{R}$, which establishes the local Lipschitz regularity of these functions.

Next we calculate
\begin{equation}\label{HessianFormula}
\begin{split}
D^2\overline{w_0} = &(1+\mu)\frac{\lambda^{1-\mu}}{(\overline{\sigma} - \tau\overline{\sigma}')^3} \cdot \\
&\left[\mu(\overline{\sigma}-\tau\overline{\sigma}')(-\overline{\sigma}',\,1) \otimes (-\overline{\sigma}',\,1) - \overline{\sigma}''(-\overline{\sigma},\,\tau) \otimes (-\overline{\sigma},\,\tau)\right],
\end{split}
\end{equation}
and the same expression with $\overline{\sigma}$ replaced by $\underline{\sigma}$ for $D^2\underline{w_0}$. By Proposition \ref{ODESuperSub}, each of the four entries in the matrix on the second line of (\ref{HessianFormula}) is bounded in absolute value by $c^{-1}\sigma^{-1/2}$, which gives
\begin{equation}\label{HessEst0}
|D^2\overline{w_0}|,\, |D^2\underline{w_0}| \leq c^{-1}\lambda^{1-\mu}\sigma^{3\mu-1/2}.
\end{equation}

For $x,\,y \in \mathbb{R}^2$ we let
$$\overline{w}(x,\,y) = \overline{w_0}(|x|,\,|y|), \quad \underline{w}(x,\,y) = \underline{w_0}(|x|,\,|y|).$$
We evaluate the quantities of interest for $\overline{w}$, resp. $\underline{w}$ on $\{(|x|,\,|y|) = \lambda^{-1}(\tau,\,\overline{\sigma}(\tau)),\, \tau \geq 0,\, \lambda > 0\}$, resp. $\{(|x|,\,|y|) = \lambda^{-1}(\tau,\,\underline{\sigma}(\tau)),\, \tau \geq 0,\, \lambda > 0\}$.
By (\ref{GradEst0}) we have
\begin{equation}\label{GradEst}
c\lambda^{-\mu}\sigma^{\mu} \leq |\nabla \overline{w}|,\, |\nabla \underline{w}| \leq c^{-1}\lambda^{-\mu}\sigma^{\mu}. 
\end{equation}
We now bound the Hessian. In coordinates that are polar in $x$ and $y$ separately, the matrix $D^2\overline{w}$ contains six nonzero entries, four of which coincide with entries of $D^2\overline{w_0}$ at $\lambda^{-1}(\tau,\,\overline{\sigma}(\tau))$. For $\tau > 0$, the remaining two (corresponding to pure second derivatives in $x$ and $y$ in directions tangent to circles in these variables centered at the origin) are $\lambda \tau^{-1}\partial_{\xi}\overline{w_0}$ and $\lambda\overline{\sigma}^{-1}\partial_{\zeta}\overline{w_0}$. Using (\ref{GradExpression}) we have the bounds
$$\lambda \tau^{-1}|\partial_{\xi}\overline{w_0}|,\, \lambda\overline{\sigma}^{-1}|\partial_{\zeta}\overline{w_0}| \leq c^{-1} \lambda^{1-\mu}\sigma^{\mu-1} \leq c^{-1}\lambda^{1-\mu}\sigma^{3\mu-1/2},$$
and similarly for $D^2\underline{w}$. We conclude from (\ref{HessEst0}) that
\begin{equation}\label{HessEst}
|D^2\overline{w}|,\, |D^2\underline{w}| \leq c^{-1}\lambda^{1-\mu}\sigma^{3\mu-1/2}.
\end{equation}

Finally, we relate the curvature of the level sets of $\overline{w},\, \underline{w}$ to a PDE. Recall that
$$\overline{\Psi}(x,\,y) = \varphi(|x|,\,|y|) = |y|\phi(|x|/|y|) = |x|\phi(|y|/|x|).$$
Using that $\partial_{\xi} \overline{w_0} < 0$ in $\{\zeta > \xi > 0\}$, we have in $\{|y| > |x|\}$ that
\begin{equation}\label{HomogeneousPDE1}
\overline{\Psi}_{ij}(\nabla \overline{w})\overline{w}_{ij} = \varphi_{11}\overline{w_0}_{\xi\xi} - 2\varphi_{12}\overline{w_0}_{\xi\zeta} + \varphi_{22}\overline{w_0}_{\zeta\zeta} - \frac{\varphi_1}{|x|} + \frac{\varphi_2}{|y|},
\end{equation}
where the derivatives of $\varphi$ are evaluated at $(|\partial_{\xi} \overline{w_0}|,\,|\partial_{\zeta}\overline{w_0}|)$. Using the formulae (\ref{GradExpression}) and (\ref{HessianFormula}) and the relation between $\varphi$ and $\phi$, and evaluating the above expression on $\{(|x|,\,|y|) = \lambda^{-1}(\tau,\,\overline{\sigma}(\tau)),\, \tau > 0,\, \lambda > 0\},$ we get
$$\overline{\Psi}_{ij}(\nabla \overline{w})\overline{w}_{ij} = -\lambda\phi''(\overline{\sigma}')G(\overline{\sigma}).$$
Indeed, up to the factor $-\lambda\phi''(\sigma')$, the first three terms on the right side of (\ref{HomogeneousPDE1}) contribute the first term in the expression (\ref{ODE}) for $G$, and the last two contribute the second two terms in the expression for $G$. The analogous calculations hold for $\underline{w}$, with $\overline{\sigma}$ replaced by $\underline{\sigma}$. Combining this calculation with Proposition \ref{ODESuperSub} we obtain:

\begin{prop}\label{CurvatureTerm}
We have
$$\overline{\Psi}_{ij}(\nabla \overline{w})\overline{w}_{ij} = -\lambda\phi''(\overline{\sigma}')G(\overline{\sigma}) \leq -c\lambda\sigma^{-5/2}$$
on $\{(|x|,\,|y|) = \lambda^{-1}(\tau,\,\overline{\sigma}(\tau)),\, \tau \geq 0,\,\lambda > 0\}$, and
$$\overline{\Psi}_{ij}(\nabla \underline{w})\underline{w}_{ij} = -\lambda\phi''(\underline{\sigma}')G(\underline{\sigma}) \geq c\lambda\sigma^{-5/2}$$
on $\{(|x|,\,|y|) = \lambda^{-1}(\tau,\,\underline{\sigma}(\tau)),\, \tau \geq 0,\,\lambda > 0\}$.
\end{prop}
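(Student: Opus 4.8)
The plan is to reduce the claimed identity to a pointwise statement on the slice $\lambda = 1$ by homogeneity, verify it there by substituting the formulas $(\ref{GradExpression})$ and $(\ref{HessianFormula})$ into $(\ref{HomogeneousPDE1})$, and then read off the two inequalities from Proposition $\ref{ODESuperSub}$. First I would note the scaling: since $\overline{w}$ is homogeneous of degree $1+\mu$, $\nabla\overline{w}$ is homogeneous of degree $\mu$, so $\overline{\Psi}_{ij}(\nabla\overline{w})$ — the composition of the $(-1)$-homogeneous map $p\mapsto \overline{\Psi}_{ij}(p)$ with $\nabla\overline{w}$ — is homogeneous of degree $-\mu$, while $\overline{w}_{ij}$ is homogeneous of degree $\mu-1$. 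Hence $\overline{\Psi}_{ij}(\nabla\overline{w})\overline{w}_{ij}$ is homogeneous of degree $-1$ in the base point, and it suffices to compute it at $(|x|,\,|y|) = (\tau,\,\overline{\sigma}(\tau))$ and multiply by $\lambda$; the same reduction applies to $\underline{w}$.

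On that slice I would carry out the substitution indicated before the proposition. Writing $\varphi(\xi,\zeta) = \zeta\phi(\xi/\zeta)$, the ratio $|\partial_\xi\overline{w_0}|/|\partial_\zeta\overline{w_0}|$ equals $\overline{\sigma}'(\tau)$, so the relevant derivatives of $\varphi$ in $(\ref{HomogeneousPDE1})$ are $\varphi_1 = \phi'(\overline{\sigma}')$, $\varphi_2 = \phi(\overline{\sigma}') - \overline{\sigma}'\phi'(\overline{\sigma}')$, and $(\varphi_{11},\,\varphi_{12},\,\varphi_{22}) = |\partial_\zeta\overline{w_0}|^{-1}\,\phi''(\overline{\sigma}')\,(1,\,-\overline{\sigma}',\,(\overline{\sigma}')^2)$. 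The decisive observation is that the second-order contribution $\varphi_{11}\overline{w_0}_{\xi\xi} - 2\varphi_{12}\overline{w_0}_{\xi\zeta} + \varphi_{22}\overline{w_0}_{\zeta\zeta}$ equals $|\partial_\zeta\overline{w_0}|^{-1}\,\phi''(\overline{\sigma}')$ times the value of the quadratic form $D^2\overline{w_0}$ on $(1,\,\overline{\sigma}')$, and since $(1,\,\overline{\sigma}')$ is orthogonal to $(-\overline{\sigma}',\,1)$, the ``$\mu$'' rank-one term in $(\ref{HessianFormula})$ annihilates; using $(1,\,\overline{\sigma}')\cdot(-\overline{\sigma},\,\tau) = -(\overline{\sigma} - \tau\overline{\sigma}')$ together with $|\partial_\zeta\overline{w_0}| = (1+\mu)/(\overline{\sigma} - \tau\overline{\sigma}')$ this collapses to exactly $-\phi''(\overline{\sigma}')\,\overline{\sigma}''$, i.e. $-\phi''(\overline{\sigma}')$ times the first term of $G(\overline{\sigma})$. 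The remaining terms $-\varphi_1/\tau + \varphi_2/\overline{\sigma}$ are, by the definitions $(\ref{Coeffs})$ of $P$ and $Q$, precisely $-\phi''(\overline{\sigma}')\bigl(\tfrac1\tau P(\overline{\sigma}') + \tfrac1{\overline{\sigma}}Q(\overline{\sigma}')\bigr)$. Adding the two contributions and comparing with $(\ref{ODE})$ gives $\overline{\Psi}_{ij}(\nabla\overline{w})\overline{w}_{ij} = -\phi''(\overline{\sigma}')G(\overline{\sigma})$ at $\lambda = 1$, hence $-\lambda\phi''(\overline{\sigma}')G(\overline{\sigma})$ in general; the word-for-word identical computation with $\overline{\sigma}$ replaced by $\underline{\sigma}$ handles $\underline{w}$.

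Finally I would extract the inequalities. For $\tau \geq 0$ the function $\overline{\sigma}'$ is nondecreasing (by local uniform convexity), vanishes at $0$, and tends to $1$, so it takes values in $[0,\,1)$; since $\phi$ is uniformly convex, $\phi'' \geq c > 0$ on $[0,\,1]$, whence $\phi''(\overline{\sigma}') \geq c$, and likewise $\phi''(\underline{\sigma}') \geq c$. Combining this with the bounds $(\ref{ODERHS})$ of Proposition $\ref{ODESuperSub}$, namely $G(\overline{\sigma}) \geq c\sigma^{-5/2}$ and $-G(\underline{\sigma}) \geq c\sigma^{-5/2}$, yields $\overline{\Psi}_{ij}(\nabla\overline{w})\overline{w}_{ij} \leq -c\lambda\sigma^{-5/2}$ and $\overline{\Psi}_{ij}(\nabla\underline{w})\underline{w}_{ij} \geq c\lambda\sigma^{-5/2}$. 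The only place where genuine care is required is the algebraic identity of the middle step — the cancellation $(1,\,\overline{\sigma}')\perp(-\overline{\sigma}',\,1)$ is exactly what makes the anisotropic curvature operator collapse to $-\phi''G$ instead of spawning extra $\mu$-terms, and one must verify that the lower-order terms reproduce the precise definitions of $P$ and $Q$. Once that identity is in hand, the two inequalities are an immediate consequence of Proposition $\ref{ODESuperSub}$ and the uniform positive lower bound on $\phi''$ over the compact range of $\overline{\sigma}'$ and $\underline{\sigma}'$, so I expect no further obstacle.
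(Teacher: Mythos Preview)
Your proof is correct and follows essentially the same route as the paper: reduce to the two-variable identity $(\ref{HomogeneousPDE1})$, substitute $(\ref{GradExpression})$ and $(\ref{HessianFormula})$, identify the second-order terms with $-\phi''\overline{\sigma}''$ and the lower-order terms with $-\phi''(\tfrac{1}{\tau}P+\tfrac{1}{\overline{\sigma}}Q)$, and then invoke Proposition~\ref{ODESuperSub}. The only cosmetic difference is that you first use the $(-1)$-homogeneity of $\overline{\Psi}_{ij}(\nabla\overline{w})\overline{w}_{ij}$ to reduce to the slice $\lambda=1$, whereas the paper carries the factor $\lambda$ through the computation directly; your explicit orthogonality observation $(1,\overline{\sigma}')\perp(-\overline{\sigma}',1)$ is exactly the mechanism behind the paper's remark that ``the first three terms on the right side of $(\ref{HomogeneousPDE1})$ contribute the first term in the expression $(\ref{ODE})$ for $G$.''
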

To conclude the section we note that inequality (\ref{Ordering}) from Proposition \ref{ODESuperSub} and homogeneity imply that
\begin{equation}\label{Ordering2}
\underline{w}(\cdot) \leq \overline{w}(2\cdot) = 2^{1+\mu}\overline{w}(\cdot) \text{ in } \{|y| > |x|\}.
\end{equation}

\section{Choice of Integrand}\label{IntegrandChoice}
In this section we fix our choice of integrand. Let $\overline{\Psi}$ be the same as above, and let $F$ be a smooth even convex function on $\mathbb{R}$ such that
$$F(s) = s + \frac{1}{8}s^{-1},\, s > 1/2.$$
We let $\Psi$ be any smooth locally uniformly convex function on $\mathbb{R}^4$, depending only on $|x|$ and $|y|$ and invariant under exchanging $x$ and $y$, such that
$$\Psi = F(\overline\Psi) \text{ in } \{\overline{\Psi} > 1\}.$$
(See Remark \ref{IntegrandExtension}). Finally, we define
\begin{equation}\label{Phi}
\Phi(x,\,y,\,z) := \begin{cases} |z| \Psi\left(\frac{x}{|z|},\, \frac{y}{|z|}\right), \quad z \in \mathbb{R} \backslash \{0\}, \\
\overline{\Psi}(x,\,y), \quad z = 0.
\end{cases}
\end{equation}
\begin{prop}\label{UniformEllipticity}
The function $\Phi$ is in $C^{\infty}(\mathbb{S}^4)$ and is a uniformly elliptic integrand.
\end{prop}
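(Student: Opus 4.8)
The plan is to verify the two assertions of Proposition \ref{UniformEllipticity} separately: first that $\Phi$ extends to a $C^\infty$ function on the sphere $\mathbb{S}^4$ (equivalently, that the one-homogeneous extension is smooth away from the origin), and then that $\{\Phi < 1\}$ is uniformly convex. The key structural observation is that $\Phi$ is a classical \emph{indicatrix/figuratif} construction: given a uniformly convex integrand $\Psi$ on $\mathbb{R}^4$, the formula $\Phi(x,y,z) = |z|\,\Psi(x/|z|, y/|z|)$ for $z \neq 0$ is exactly the one-homogeneous function on $\mathbb{R}^5$ whose restriction to the hyperplane $\{z = 1\}$ equals $\Psi$. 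The only issue is the behavior as $z \to 0$, where a priori $\Phi$ could fail to be smooth; so the heart of the proof is to check that the prescribed value $\Phi(x,y,0) = \overline{\Psi}(x,y)$ is compatible with smoothness, using the asymptotic structure $\Psi = F(\overline{\Psi})$ with $F(s) = s + \tfrac18 s^{-1}$ for $s > 1/2$, and that $\overline{\Psi}$ is itself one-homogeneous in $(x,y)$.

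For the smoothness, I would argue as follows. Away from $\{z = 0\}$ the function $\Phi$ is manifestly smooth, being a composition of the smooth function $\Psi$ with smooth maps (using $|z|$ is smooth for $z \neq 0$ and evenness of $\Psi$ in all variables to handle $z < 0$). So fix a point $(x_0, y_0, 0)$ on $\mathbb{S}^4$; then $|(x_0,y_0)| = 1$, so in particular $(x_0, y_0)$ is away from the cone $C_{11}$, and $\overline{\Psi}$ is smooth and positive near $(x_0,y_0)$ with $\overline{\Psi}(x_0,y_0) = 1$. For $(x,y)$ near $(x_0, y_0)$ and $z$ small but nonzero, one has $\overline{\Psi}(x/|z|, y/|z|) = |z|^{-1}\overline{\Psi}(x,y)$ by one-homogeneity, and this is large (bigger than $1/2$), so we are in the regime where $\Psi = F(\overline{\Psi})$ and
$$\Phi(x,y,z) = |z|\,F\!\left(\tfrac{\overline{\Psi}(x,y)}{|z|}\right) = |z|\left(\tfrac{\overline{\Psi}(x,y)}{|z|} + \tfrac{1}{8}\tfrac{|z|}{\overline{\Psi}(x,y)}\right) = \overline{\Psi}(x,y) + \tfrac18\,\frac{z^2}{\overline{\Psi}(x,y)}.$$
The right-hand side is now a smooth function of $(x,y,z)$ in a full neighborhood of $(x_0,y_0,0)$ (since $z^2$ is smooth and $\overline{\Psi}$ is smooth and nonvanishing there), and it agrees with $\Phi$ for $z \neq 0$ and with $\overline{\Psi}(x,y)$ at $z = 0$. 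Hence $\Phi$ coincides with this smooth expression near $(x_0,y_0,0)$, proving $\Phi \in C^\infty$ near every point of $\mathbb{S}^4$, and therefore $\Phi \in C^\infty(\mathbb{S}^4)$. It is positive since $\Psi > 0$ and the correction term is nonnegative.

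For uniform ellipticity, I need to show $\{\Phi < 1\}$ is uniformly convex, equivalently that the restriction of $D^2\Phi$ to the tangent space of $\{\Phi = 1\}$ is uniformly positive definite, or equivalently (by a standard fact for one-homogeneous convex functions) that $D^2\Phi$ is positive definite on $\mathbb{S}^4$ with kernel exactly the radial direction. I would split into the two regimes. Where $z \neq 0$: here $\{\Phi < 1\}$ intersected with $\{z = t\}$ for fixed $t \neq 0$ is a rescaled copy of $\{\Psi < 1/t\}$, hmm, more precisely the unit ball of $\Phi$ is the convex hull / appropriate construction from the unit ball of $\Psi$ together with the two poles $(0,0,\pm 1)$ are \emph{not} in general the right picture, so instead I would work directly: on $\{z \ne 0\}$, uniform convexity of $\Phi$ follows from uniform convexity of $\Psi$ by the chain rule computation for the suspension-type construction, the curvature of the level set of $\Phi$ in directions tangent to $\{z = \mathrm{const}\}$ being controlled from below by that of $\Psi$, and the remaining curvature (in the $z$-direction) being strictly positive because $\Psi$ is bounded below by a positive constant on the relevant compact set. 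Near $\{z = 0\}$: from the explicit formula $\Phi = \overline{\Psi}(x,y) + \tfrac18 z^2/\overline{\Psi}(x,y)$ derived above, one computes $D^2\Phi$ directly; the $(x,y)$-block is $D^2\overline{\Psi} + O(z^2)$, which is uniformly convex on the relevant directions because $\overline{\Psi}$ is a uniformly elliptic integrand on $\mathbb{R}^4$; the $zz$-entry is $\tfrac14/\overline{\Psi} \geq c > 0$; and the mixed terms are $O(|z|)$, hence a small perturbation. Since $\mathbb{S}^4$ is compact and we have a uniform lower bound on the second fundamental form of $\{\Phi = 1\}$ in a neighborhood of $\{z = 0\}$ and (by the first regime's argument, applied on the compact complement) away from it, we conclude uniform convexity of $\{\Phi < 1\}$.

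The main obstacle I anticipate is the uniform ellipticity in the region $\{z \neq 0\}$ where $\overline{\Psi} \le 1$, i.e.\ near the cone $C_{11}$ in the $(x,y)$ variables but with $z$ bounded away from $0$ — there $\Psi$ is only assumed to be \emph{some} smooth locally uniformly convex extension, not given by a formula, so one cannot compute $D^2\Phi$ explicitly and must instead argue abstractly that the suspension construction $\Phi(v,z) = |z|\Psi(v/|z|)$ preserves uniform convexity; this is a compactness argument on the compact set $\{|z| \ge \delta\} \cap \mathbb{S}^4$ combined with the fact, which should be recorded (perhaps it is the content of Remark \ref{IntegrandExtension}), that such a uniformly convex extension $\Psi$ exists and that $\{\Psi < c\}$ is uniformly convex for each $c$ in the relevant compact range of values. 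Once the correct abstract statement about the suspension is isolated, the gluing with the explicit near-$\{z=0\}$ computation is routine.
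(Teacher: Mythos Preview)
Your approach is essentially identical to the paper's: both derive the explicit formula $\Phi = \overline{\Psi}(x,y) + \tfrac{1}{8}z^2/\overline{\Psi}(x,y)$ near $\{z=0\}$ via one-homogeneity of $\overline{\Psi}$ and the exact form of $F$, use it to verify smoothness and positive-definiteness of $D^2\Phi$ on tangent hyperplanes at $\{z=0\}$, and defer the $\{z\neq 0\}$ region to the assumed local uniform convexity of $\Psi$. Two small slips that do not affect the argument: a point $(x_0,y_0)\in\mathbb{S}^3$ need not be away from $C_{11}$, and $\overline{\Psi}(x_0,y_0)$ need not equal $1$---but all you actually use is that $\overline{\Psi}$ is smooth and positive on $\mathbb{S}^3$, which holds everywhere since $\overline{\Psi}$ is a uniformly elliptic integrand.
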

\begin{proof}
On $\{|z| > 0\}$ this follows from the local uniform convexity of $\Psi$, which in $\{\overline{\Psi} > 1\}$ follows from the identity
$$D^2\Psi = F'(\overline{\Psi})D^2\overline{\Psi} + F''(\overline{\Psi})\nabla \overline{\Psi} \otimes \nabla \overline{\Psi},$$
the uniform ellipticity of $\overline{\Psi}$ and the fact that $F''(s) > 0$ for $s > 1$. We now examine the points on $\mathbb{S}^4 \cap \{z = 0\}$. Using the fact that $F(s) = s + s^{-1}/8$ for $s > 1/2$, we have in a neighborhood of $\mathbb{S}^4 \cap \{z = 0\}$ that
$$\Phi(x,\,y,\,z) = \overline{\Psi}(x,\,y) + \frac{z^2}{8\overline{\Psi}(x,\,y)}.$$
Thus, on a hyperplane tangent to $\mathbb{S}^4$ on $\{z = 0\}$, the horizontal second derivatives of $\Phi$ at the point of tangency agree with those of $\overline{\Psi}$, the mixed horizontal and vertical second derivatives vanish, and the pure vertical second derivative is strictly positive, completing the proof.
\end{proof}

\begin{rem}\label{IntegrandExtension}
This can be accomplished e.g. by taking $F$ to be a positive constant in a small neighborhood of $0$ and locally uniformly convex otherwise, and defining $\Psi$ to be the sum of $F(\overline{\Psi})$ and a small multiple of an appropriate radial cutoff of $|x|^2 + |y|^2$.
\end{rem}

\section{Super and Sub Solutions}\label{SuperSub}
Let $\Phi,\,\Psi$ be as in the previous section. We note as in \cite{M} that the graph of a function $u$ on a domain $\Omega \subset \mathbb{R}^4$ is a critical point of $A_{\Phi}$ if and only if
\begin{equation}\label{GraphEquation}
\Psi_{ij}(\nabla u)u_{ij} = 0
\end{equation}
in $\Omega$. In this section we ``re-stack" the level sets of $\overline{w}$ and $\underline{w}$ to obtain super- and sub-solutions of (\ref{GraphEquation}) in $\{|y| > |x|\}$, without changing their growth rates.

\subsection{Supersolution}
To begin we fix the quantities
$$\gamma := \frac{2\mu}{1+\mu},\, \quad M := \frac{2}{1/2 - \mu}, \quad \delta := \frac{\gamma}{M}.$$
We define
$$\overline{u} = \overline{H}(\overline{w}),$$
where $\overline{H}(0) = 0$ and
$$\overline{H}'(s) = A|s|^{-\gamma} + e^{B\int_{|s|}^{\infty} \frac{t^{\delta-1}}{1+t^{2\delta}}\,dt} = A|s|^{-\gamma} + e^{BI(|s|)}.$$
We note that $\overline{H}$ is well-defined because $\gamma < 1$. We claim for $B = A^2$ and $A$ sufficiently large that $\overline{u}$ is a super-solution to (\ref{GraphEquation}) in $\{|y| > |x|\}$.

To see this, note first that
$$\overline{H}'(s) \geq 1 + As^{-\frac{2\mu}{1+\mu}}.$$
Combining this with the estimate (\ref{GradEst}) we conclude on $\{(|x|,\,|y|) = \lambda^{-1}(\tau,\,\overline{\sigma}(\tau)),\, \tau \geq 0,\, \lambda > 0\}$ that
$$|\nabla \overline{u}| =  \overline{H}'(\lambda^{-1-\mu})|\nabla \overline{w}| \geq c(A \lambda^{\mu} + \lambda^{-\mu}) \geq cA^{1/2}.$$
For $A > 1$ sufficiently large we conclude that
$$\nabla \overline{u} (\{|y| > |x|\}) \subset \{\overline{\Psi} > 1\},$$
thus $\nabla \overline{u}$ always lies in the region where $\Psi = F(\overline{\Psi})$. Henceforth we assume $A$ has been chosen at least this large. We calculate using the one-homogeneity of $\overline{\Psi}$ that
\begin{align*}
\Psi_{ij}(\nabla \overline{u})\overline{u}_{ij} &= F'(\overline{H}'(\overline{w})\overline{\Psi}(\nabla \overline{w}))\overline{\Psi}_{ij}(\nabla \overline{w})\overline{w}_{ij} \\
&+ F''(\overline{H}'(\overline{w})\overline{\Psi}(\nabla \overline{w}))\overline{H}''(\overline{w})\overline{\Psi}^2(\nabla \overline{w}) \\
&+ F''(\overline{H}'(\overline{w})\overline{\Psi}(\nabla \overline{w}))\overline{H}'(\overline{w})\overline{\Psi}_i(\nabla \overline{w})\overline{\Psi}_j(\nabla \overline{w})\overline{w}_{ij} \\
&:= I + II + III
\end{align*}
in $\{|y| > |x|\}$. We have that $F'(s) \geq 1/2$ and $F''(s) = s^{-3}/4$ when $s > 1$, and that $\overline{H}'' < 0$. Using this, along with Proposition \ref{CurvatureTerm}, and the estimates (\ref{GradEst}) and (\ref{HessEst}) on $\nabla \overline{w}$ and $D^2\overline{w}$, we conclude that
$$I \leq -c\lambda \sigma^{-5/2}, \, II \leq -c\left|\overline{H}''\right|\overline{H}'^{-3}\lambda^{\mu}\sigma^{-\mu},\, III \leq c^{-1}\overline{H}'^{-2}\lambda^{1+2\mu}\sigma^{-1/2}$$
on $\{(|x|,\,|y|) = \lambda^{-1}(\tau,\,\overline{\sigma}(\tau)),\, \tau \geq 0,\, \lambda > 0\}$.
Here $\overline{H}$ and its derivatives are evaluated at $\lambda^{-1-\mu}$. We conclude in particular that
$$\Psi_{ij}(\nabla \overline{u})\overline{u}_{ij} \leq -c\left(\lambda \sigma^{-5/2} + \left|\overline{H}''\right|\overline{H}'^{-3}\lambda^{\mu}\sigma^{-\mu}\right) + c^{-1}\overline{H}'^{-2}\lambda^{1+2\mu}\sigma^{-1/2}.$$
We can make sure this is nonpositive provided
$$c^{-2} \leq \overline{H}'^2(\lambda^{-1-\mu})\lambda^{-2\mu}\sigma^{-2} + \lambda^{-1-\mu}\left|\overline{H}''\right|/\overline{H}'(\lambda^{-1-\mu})\sigma^{1/2 - \mu}.$$
After the change of variable $s = \lambda^{-1-\mu},\, \tilde{\sigma} = \sigma^{1/2-\mu}$ this desired inequality becomes
\begin{equation}\label{DesiredSuper}
c^{-2} \leq s|\overline{H}''|/\overline{H}'(s)\tilde{\sigma} + \overline{H}'^2(s)s^{\gamma}\tilde{\sigma}^{-M} := E
\end{equation}
for all $s > 0$ and $\tilde{\sigma} \geq 1$. We have
$$s|\overline{H}''|/\overline{H}'(s) = \frac{A\gamma s^{-\gamma} + B\frac{s^{\delta}}{1 + s^{2\delta}}e^{BI}}{As^{-\gamma}+e^{BI}},$$
and since $\overline{H}' \geq 1 + As^{-\gamma}$,
$$\overline{H}'^2(s)s^{\gamma} \geq A^2s^{-\gamma} + s^{\gamma}.$$ 
To continue we split the verification of (\ref{DesiredSuper}) into two cases. In the case $s \leq 1$ we have
\begin{align*}
E &\geq \frac{A\gamma s^{-\gamma-\delta} + \frac{B}{2}e^{BI}}{As^{-\gamma} + e^{BI}}s^{\delta}\tilde{\sigma} + A^2s^{-\gamma}\tilde{\sigma}^{-M} \\
&\geq \text{min}\{\gamma,\,1/2\}X + A^2X^{-M}  \quad \text{ (here } X = s^{\delta}\tilde{\sigma}) \\
&\geq cA^{\frac{2}{M+1}}
\end{align*}
for any $B \geq 1$, hence (\ref{DesiredSuper}) is true in the case $s \leq 1$ for $A$ large and any $B \geq 1$. We may take e.g. $B = A^2$. Then in the remaining case $s \geq 1$
we have
\begin{align*}
E &\geq \frac{B\frac{s^{2\delta}}{1+s^{2\delta}}e^{BI}}{A + e^{BI}}s^{-\delta}\tilde{\sigma} + s^{\gamma}\tilde{\sigma}^{-M}  \\
&\geq \frac{1}{2}B\frac{e^{BI}}{A + e^{BI}}X + X^{-M} \quad \text{ (here } X = s^{-\delta}\tilde{\sigma}) \\
&\geq \frac{B}{2(A + 1)}X + X^{-M} \\
&\geq \frac{1}{4}AX + X^{-M} \quad \text{(using that } B = A^2 > 1) \\
&\geq cA^{\frac{M}{M+1}},
\end{align*}
concluding the proof of (\ref{DesiredSuper}) provided $B = A^2$ and $A$ is large.

\begin{rem}\label{GeomMeaning}
Roughly, the term ``$I$'' in the PDE, which represents the curvature of the level sets of $\overline{u}$, dominates in a region of $\{|y| > |x|\}$ that lies away from the boundary $C_{11}$, is tangent to $C_{11}$ near $0$, and separates sub-linearly from $C_{11}$ for $|y|$ large. The term ``$II$," which represents the remaining vertical curvature of the graph of $\overline{u}$, dominates near $C_{11}$.
\end{rem}

\subsection{Subsolution}
The subsolution is similar to and in fact easier to construct than the supersolution. Let $\gamma,\,M,$ and $\delta$ be as above. We first define
$$u_0 = \underline{H}(\underline{w}),$$
with $\underline{H}(0) = 0$ and
$$\underline{H}'(s) = e^{-C\int_{s}^{\infty} \frac{t^{\delta-1}}{1+t^{2\delta}}\,dt}.$$
We claim that for all $C \geq C_0$ large, there exists $\lambda_C > 0$ small such that the function $u_0$ is a sub-solution of (\ref{GraphEquation}) in 
$$\Omega_C := \{\underline{w} > \lambda_C^{-1-\mu}\} = \{(|x|,\,|y|) = \lambda^{-1}(\tau,\,\underline{\sigma}(\tau)),\, \tau \geq 0,\, 0 < \lambda < \lambda_C\}.$$ 
The value $\lambda_C$ is chosen for any $C \geq 1$ as follows. First, we choose $\lambda_C < 1$ small such that 
\begin{equation}\label{GradCondition}
\underline{H}'(\lambda_C^{-1-\mu}) \geq 1/2.
\end{equation}
Using (\ref{GradEst}) we conclude that
$$|\nabla u_0| \geq c\lambda_C^{-\mu} \text{ in } \Omega_C.$$
After possibly taking $\lambda_C$ smaller we thus have
$$\nabla u_0(\Omega_C) \subset  \{\overline{\Psi} > 1\}.$$
This fixes our choice of $\lambda_C$ for arbitrary $C \geq 1$.
Since $\Psi = F(\overline{\Psi})$ in $\{\overline{\Psi} > 1\}$ we conclude in $\Omega_C$ that
\begin{align*}
\Psi_{ij}(\nabla u_0)(u_0)_{ij} &= F'(\underline{H}'(\underline{w})\overline{\Psi}(\nabla \underline{w}))\overline{\Psi}_{ij}(\nabla \underline{w})\underline{w}_{ij} \\
&+ F''(\underline{H}'(\underline{w})\overline{\Psi}(\nabla \underline{w}))\underline{H}''(\underline{w})\overline{\Psi}^2(\nabla \underline{w}) \\
&+ F''(\underline{H}'(\underline{w})\overline{\Psi}(\nabla \underline{w}))\underline{H}'(\underline{w})\overline{\Psi}_i(\nabla \underline{w})\overline{\Psi}_j(\nabla \underline{w})\underline{w}_{ij} \\
&= I + II + III.
\end{align*}
Using that $F' \geq 1/2,\, F''(s) = s^{-3}/4$ for $s \geq 1$, Proposition \ref{CurvatureTerm}, the estimates (\ref{GradEst}) and (\ref{HessEst}) on $\nabla \underline{w}$ and $D^2\underline{w}$, and (\ref{GradCondition}), we conclude on $\{(|x|,\,|y|) = \lambda^{-1}(\tau,\,\underline{\sigma}(\tau)),\, \tau \geq 0,\, 0 < \lambda < \lambda_C\}$ that
$$I \geq c\lambda \sigma^{-5/2}, \, II \geq c\underline{H}''(\lambda^{-1-\mu})\lambda^{\mu}\sigma^{-\mu},\, III \geq -c^{-1}\lambda^{1+2\mu}\sigma^{-1/2}.$$
We emphasize here that $c$ does not depend on $C$. We have in particular that
$$\Psi_{ij}(\nabla u_0)(u_0)_{ij} \geq c\left(\lambda \sigma^{-5/2} + \underline{H}''(\lambda^{-1-\mu})\lambda^{\mu}\sigma^{-\mu}\right) - c^{-1}\lambda^{1+2\mu}\sigma^{-1/2}$$
in $\Omega_C$. After the same change of variable as in the previous subsection, we conclude that $u_0$ is a sub-solution of (\ref{GraphEquation}) in $\Omega_C$ provided
\begin{equation}\label{DesiredSub}
c^{-2} < s\underline{H}''(s)\tilde{\sigma} + s^{\gamma}\tilde{\sigma}^{-M} := E \text{ for all } s \geq \lambda_C^{-1-\mu},\, \tilde{\sigma} \geq 1.
\end{equation}
Since 
$$s\underline{H}''(s) = C\frac{s^{\delta}}{1+s^{2\delta}}\underline{H}'(s) \geq \frac{1}{4}Cs^{-\delta}$$ 
for $s \geq \lambda_C^{-1-\mu}$ (here we use (\ref{GradCondition}) again), we have
\begin{align*}
E &\geq \frac{1}{4}Cs^{-\delta}\tilde{\sigma} + s^{\gamma}\tilde{\sigma}^{-M} \\
&= \frac{1}{4}CX + X^{-M} \text{ (here } X = s^{-\delta}\tilde{\sigma}) \\
&\geq cC^{\frac{M}{M+1}}
\end{align*}
for $s \geq \lambda_{C}^{-1-\mu},\, \tilde{\sigma} \geq 1$. Thus, (\ref{DesiredSub}) holds for any choice $C \geq C_0$ large.

\begin{rem}\label{GeomMeaning2}
Again, the term ``$I$" representing the curvature of the level sets of $u_0$ dominates in a region of $\Omega_C$ that lies away from $C_{11}$ and separates sub-linearly from $C_{11}$ as $|y|$ gets large, and the term ``$II$" representing the vertical curvature of the graph of $u_0$ dominates in the region of $\Omega_C$ close to $C_{11}$.
\end{rem}

We have shown that $u_0$ is a sub-solution of (\ref{GraphEquation}) in $\Omega_{C_0}$. To get a sub-solution of (\ref{GraphEquation}) on all of $\{|y| > |x|\}$ we take a truncation of $u_0$. Namely, the function
$$\underline{u_0} := \text{max}\{0,\, u_0 - \underline{H}(\lambda_{C_0}^{-1-\mu})\}$$
is a sub-solution to (\ref{GraphEquation}) on $\{|y| > |x|\}$, as is 
$$\underline{u_0}_R(\cdot) := R^{-1}\underline{u_0}(R\cdot)$$ 
for any $R > 0$. To conclude this section we let
$$\underline{u} = \underline{u_0}_{R} \text{ in } \{|y| > |x|\}, \quad R = 2^{-\frac{1+\mu}{\mu}},$$
and we extend $\underline{u}$ to all of $\mathbb{R}^4$ by odd reflection over $C_{11}$.
For this choice of $R$, we have in $\{|y| > |x|\}$ that
\begin{equation}\label{Ordering3}
\underline{u} \leq \overline{u}.
\end{equation}
Indeed, in $\{|y| > |x|\}$ we have
\begin{align*}
\underline{u} &= R^{-1}\underline{u_0}(R \cdot) \\
&\leq R^{-1}u_0(R \cdot) \\
&= R^{-1}\underline{H}(\underline{w}) (R \cdot) \\
&\leq R^{-1}\underline{w}(R \cdot) \quad \text{(since } \underline{H}' < 1) \\
&= R^{\mu}\underline{w} \quad \text{(homogeneity)} \\
&\leq \overline{w} \quad \text{(choice of } R \text{ and inequality } (\ref{Ordering2})) \\
&\leq \overline{H}(\overline{w}) \quad \text{(since } \overline{H}' > 1) \\
&= \overline{u}.
\end{align*}
Furthermore, since $\underline{H}$ has linear growth, we have that
$$\sup_{B_r} \underline{u} \geq cr^{1+\mu}$$
for all $r$ large.

\section{Proof of Theorem \ref{Main}}\label{Proof}
Finally, we put everything together to prove the main theorem.

\begin{proof}[{\bf Proof of Theorem \ref{Main}:}]
The definition and uniform ellipticity of $\Phi$ were established in Section \ref{IntegrandChoice}. For $k \geq 1$, solve the Dirichlet problem
$$\Psi_{ij}(\nabla u_k)\partial_i\partial_ju_k = 0, \quad u_k|_{\partial B_k} = \overline{u}.$$
We have the existence of a unique solution $u_k \in C^{\infty}(B_k) \cap C\left(\overline{B_k}\right)$ by the results in Section $5$ of \cite{Si0}. By the symmetries of the integrand $\Psi$ and the boundary data and uniqueness, the functions $u_k$ depend only on $|x|$ and $|y|$, and are odd over $C_{11}$ (note that $\overline{u}$ is odd over $C_{11}$). In particular, they vanish on $C_{11}$. Using (\ref{Ordering3}) and the maximum principle we conclude that
$$\underline{u} \leq u_k \leq \overline{u}$$
in $B_k \cap \{|y| > |x|\}$, with the reverse inequality on the other side of $C_{11}$. Simon's interior gradient estimate (see Section $5$ in \cite{Si0}) implies that for any $R > 0$ and $k > 2R$, the norm $\|u_k\|_{C^1(B_R)}$ is bounded by a constant independent of $k$. We can in fact replace the space $C^1(B_R)$ in this estimate by $C^m(B_R)$ for any $m$, using De Giorgi-Nash-Moser theory and Schauder estimates (\cite{GT}). We may thus extract a subsequence of $\{u_k\}$ that converges locally uniformly along with all its derivatives to a smooth limit $u$ which solves the equation
$$\Psi_{ij}(\nabla u)u_{ij} = 0$$
on $\mathbb{R}^4$ and furthermore satisfies
$$\underline{u} \leq u \leq \overline{u}$$
in $\{|y| > |x|\}$, with the reverse inequality otherwise. Since 
$$\sup_{B_r} \underline{u} \geq cr^{1+\mu}$$ 
for all $r$ large, this completes the proof.
\end{proof}

\section{Discussion}\label{DiscussionSec}

In this final section we discuss how our methods recover, in a systematic way, known examples of nonlinear entire minimal graphs, and we discuss some open questions related to this work.

\subsection{Entire minimal graphs asymptotic to $C_{kk} \times \mathbb{R}$}
The above approach adapts easily to constructing nonlinear entire solutions to equations of minimal surface type whose graphs are asymptotic to $C_{kk} \times \mathbb{R}$ for any $k \geq 1$. In this subsection we outline how to do this in the case of the minimal surface equation and $k \geq 3$, both recovering the examples in \cite{BDG} and giving a systematic way of building super- and sub-solutions starting from a foliation.

For $x,\,y \in \mathbb{R}^{k+1},\, k \geq 3,$ the minimal leaves foliating a side of the cone $C_{kk}$ are dilations of $\{|y| = \sigma(|x|)\}$, where $\sigma$ solves
$$G(\sigma) := \sigma'' + k(1+\sigma'^2)\left(\frac{\sigma'}{\tau} - \frac{1}{\sigma}\right) = 0, \quad \sigma(0) = 1,\, \quad \sigma'(0) = 0.$$
The quantity $G$ is equivalent to the mean curvature of the leaf, up to multiplying by positive constants.
An analysis of this ODE similar to that done above in the more general anisotropic setting shows that for
$$\mu = (k-1/2) - \sqrt{(k-1/2)^2 - 2k},$$
we have
$$\sigma = \tau + a\tau^{-\mu} +O_2(\tau^{-\alpha}) \text{ for some } a > 0.$$
Here
\begin{align*}
\alpha &= \text{min}\{k-1/2 + \sqrt{(k-1/2)^2 - 2k},\, 2\mu + 1\} \\
&= \begin{cases} k-1/2 + \sqrt{(k-1/2)^2 - 2k},\, k = 3 \\
2\mu + 1,\,k \geq 4.
\end{cases}
\end{align*}
From here the analysis follows the same lines. Let $\beta \in (\mu,\,\alpha)$. By analyzing the linearized equation at $\sigma$ with right hand side $\sigma^{-\beta-2}$,
one produces perturbed leaves defined by functions $(\underline{\sigma},\,\overline{\sigma})$ with the asymptotic behavior $\tau + (\underline{a},\,\overline{a})\tau^{-\mu} + O_2(\tau^{-\beta})$ and $\underline{a},\,\overline{a} > 0$, and mean curvature $(-G(\underline{\sigma}),\,G(\overline{\sigma})) \geq c \sigma^{-\beta-2}$. Define $\underline{w},\, \overline{w}$ to be functions that are homogeneous of degree $1 + \mu$ with the perturbed leaves defined by $\underline{\sigma},\,\overline{\sigma}$ as level sets, and then choose $\underline{H},\, \overline{H}$ with linear growth and a constant $K$ such that the minimal surface operator applied to $\overline{H}(\overline{w})$ is nonpositive and to $\max\{0,\,\underline{H}(\underline{w})-K\}$ is nonnegative in $\{|y| > |x|\}$ (recall in this case that $F(s) = \sqrt{1+s^2}$). The analysis is nearly identical after changing the parameters $\gamma,\,M,\,\delta$ from Section \ref{SuperSub} to
$$\gamma = \frac{\mu+1/2}{\mu+1}, \quad M = \frac{2}{\beta - \mu}, \quad \delta = \frac{1}{M(\mu+1)}.$$
 As above, the key terms are a term representing the mean curvature of the level sets (we called it ``$I$" above), which was designed to have a desired sign by perturbing the leaves in the minimal foliation, and terms involving $\underline{H}'',\,\overline{H}''$ (we called them ``$II$" above) which represent a favorable curvature in the remaining ``vertical" direction. These terms dominate in complementary regions of $\{|y| > |x|\}$ (see Remarks \ref{GeomMeaning} and \ref{GeomMeaning2}).

\begin{rem}
It is known more generally by work of Hardt-Simon that each side of any area-minimizing hypercone $C$ with an isolated singularity is foliated by smooth area-minimizing hyperfurfaces \cite{HS}. In this more general setting, perturbing the leaves in the foliation to have mean curvature of a desired sign amounts to solving the Jacobi field equation with a source term decaying at a certain rate. It is feasible that our approach of perturbing the leaves, then stacking could produce entire minimal graphs asymptotic to $C \times \mathbb{R}$ provided the cone has appropriate symmetries; see next sub-section for issues that can arise.
\end{rem}

\subsection{The case $C_{kl},\, k \neq l$}
For any $k,\,l \geq 1$, each side of $C_{kl}$ is foliated by minimizers of a parametric elliptic functional \cite{MY}. Perturbing the leaves in the foliation to have anisotropic mean curvature of a desired sign works in the same way as described above for any $k$ and $l$ (and similarly for the foliations associated to the area-minimizing Lawson cones), as well as the construction of super- and sub-solutions to equations of minimal surface type on each side of the cone that are appropriately ordered and have comparable growth rates. However, to build entire solutions asymptotic to $C_{kk} \times \mathbb{R}$, we rely on the odd symmetry over $C_{kk}$ of solutions to the Dirichlet problem on bounded domains. Thus, another argument is needed to produce examples asymptotic to $C_{kl} \times \mathbb{R}$ when $k \neq l$.

In \cite{Si1} Simon constructs entire minimal graphs that are asymptotic to $C \times \mathbb{R}$ for a large class of area-minimizing cones $C$ including all of the area-minimizing Lawson cones. The approach in that work is first to solve the Dirichlet problem on $B_1$ with large values on one side of the cone and small values on the other, and then note that appropriate rescalings and translations of the resulting graphs converge to complete (but not necessarily entire) minimal graphs. It is then delicately argued that these graphs must in fact be entire, using along the way the minimal foliations constructed in \cite{HS}. Perhaps a combination of approaches in this paper and in \cite{Si1} could elucidate what is happening, and allow the construction of entire anisotropic minimal graphs asymptotic to $C_{kl} \times \mathbb{R}$ in the remaining cases $k \neq l$ and either $k + l < 6$ or $k + l = 6$ and $\text{min}\{k,\,l\} = 1$.

\subsection{Controlled Growth Results}
It is known that minimal graphs satisfying the controlled growth condition $|\nabla u| = o(|x|)$
are necessarily linear \cite{EH}. A key tool in the argument is the Simons identity for the Laplace of the second fundamental form, which has an anisotropic analogue (see \cite{W}). Similar arguments might thus be used to prove controlled growth Bernstein theorems in the anisotropic case, assuming e.g. that $|\nabla u| = O(|x|^{\epsilon})$ for some $\epsilon$ small depending on the dimension and the integrand.

\subsection{Closeness to Area}
Finally, the known results for the area functional are robust under small $C^4$ perturbations of the integrand in (\ref{PEF}) from area on $\mathbb{S}^{n}$. For example, the Bernstein theorem still holds up to dimension $n = 7$ (\cite{Si2}), and the flatness of stable critical points holds in dimension $n = 3$ (this was shown in \cite{CL1},\, \cite{CL2}, also for the first time in the case of the area functional). Interestingly, the latter result holds in dimension $n = 2$ under the hypothesis of $C^2$ closeness to the area functional \cite{Lin}. It would be interesting to determine whether or not the topology in which closeness is measured could be relaxed e.g. to $C^2$ in the other cases. By quantifying the closeness to area of the integrands in this paper or in \cite{MY}, one could gain insights into this question.



\end{document}